\newcounter{minutes}\setcounter{minutes}{\time}
\newcounter{hours}\setcounter{hours}{\time}
\date{}
\newfont{\cyrilic}{wncyr10 scaled 1000}
\title[On some inequalities for the identric and logarithmic means]
{On some inequalities for the identric, logarithmic and related means}
\author[J. S\'andor]{J\'ozsef S\'andor}
\address{Babe\c{s}-Bolyai University
Department of Mathematics
Str. Kogalniceanu nr. 1
400084 Cluj-Napoca, Romania}
\email{jsandor@math.ubbcluj.ro}
\author[B.A Bhayo]{Barkat Ali Bhayo}
\address{Department of Mathematical Information Technology, University of Jyv\"askyl\"a, 40014 Jyv\"askyl\"a, Finland\\}
\email{bhayo.barkat@gmail.com}
\newcommand{\comment}[1]{}
\theoremstyle{plain}
\newtheorem{theorem}[equation]{Theorem}
\newtheorem{theorem a}[equation]{Theorem A}
\newtheorem{theorem b}[equation]{Theorem B}
\newtheorem{lemma}[equation]{Lemma}
\newtheorem{corollary}[equation]{Corollary}
\newtheorem{remark}[equation]{Remark}
\numberwithin{equation}{section}
\begin{document}


%
%
\def\thefootnote{}
\footnotetext{ \texttt{\tiny File:~\jobname .tex,
          printed: \number\year-\number\month-\number\day,
          \thehours.\ifnum\theminutes<10{0}\fi\theminutes}
} \makeatletter\def\thefootnote{\@arabic\c@footnote}\makeatother


\begin{abstract}
We offer new proofs, refinements as well as new results related to classical means of two variables, including the identric and logarithmic means.
\end{abstract}

\maketitle

\bigskip
{\bf 2010 Mathematics Subject Classification}: 26D05, 26D15, 26D99.

{\bf Keywords}: Means of two arguments, inequalities for means, identities for means, trigonometric and hyperbolic inequalities.


\section{\bf Introduction}
Since last few decades, the inequalities involving the classical means
such as arithmetic mean $A$, geometric mean $G$, identric mean $I$ and logarithmic mean $L$ and weighted geometric mean $S$ have been  studied extensively by numerous authors, e.g. see \cite{alzer1, alzer2,carlson,ns1004, ns1004a,sandorc,sandord,sandore}. 

For two positive real numbers $a$ and $b$, we define
$$A=A(a,b)=\frac{a+b}{2},\quad G=G(a,b)=\sqrt{ab},$$
$$L=L(a,b)=\frac{a-b}{\log(a)-\log(y)},\quad a\neq b,$$
$$I=I(a,b)=\frac{1}{e}\left(\frac{a^a}{b^b}\right)^{1/(a-b)},\quad a\neq b,$$
$$S=S(a,b)=  (a^a b^b)^{1/(a+b)}.$$
For the historical background of these means we refer the reader to 
\cite{alzer2,carlson,mit,sandor611,sandorc,sandord,sandore}. Generalizations, or related means are studied in \cite{newmean,ns1004a,ns1004,ns1605,sandor611,sandornew,sandor1405}. Connections of these means with trigonometric or hyperbolic inequalities are pointed out in \cite{newmean,sandora,ns0206,sandornew,sandore}.

Our main result reads as follows:
\begin{theorem}\label{thm1}
For all distinct positive real numbers $a$ and $b$, we have
\begin{equation}\label{thmineq2905}
1< \frac{I}{\sqrt{I(A^2,G^2)}} < \frac {2}{\sqrt{e}}.
\end{equation}
Both bounds are sharp.
\end{theorem}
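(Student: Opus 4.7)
My plan is to exploit the homogeneity of degree one of both the numerator $I$ and the denominator $\sqrt{I(A^{2},G^{2})}$ to reduce the problem to a single variable. Normalizing $G=1$ and using the symmetry $I(a,b)=I(b,a)$, I may parametrize $a=e^{x}$, $b=e^{-x}$ with $x>0$, so that $G=1$ and $A=\cosh x$. Plugging into the defining formula $\log I(u,v)=(u\log u-v\log v)/(u-v)-1$ yields
\[
\log I(a,b)=x\coth x-1,\qquad \log I(A^{2},G^{2})=\frac{2\cosh^{2}x\,\log\cosh x}{\sinh^{2}x}-1.
\]
Setting
\[
f(x)=\log I(a,b)-\tfrac12\log I(A^{2},G^{2})=x\coth x-\frac{\cosh^{2}x\,\log\cosh x}{\sinh^{2}x}-\tfrac12,
\]
Theorem \ref{thm1} becomes $0<f(x)<\log 2-\tfrac12$ on $(0,\infty)$, both bounds sharp.

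The two bounds should appear as boundary values: a Taylor expansion gives $f(x)=x^{2}/12+O(x^{4})$ as $x\to 0^{+}$, while $\coth x\to 1$, $\cosh^{2}x/\sinh^{2}x\to 1$, together with $\log\cosh x=x-\log 2+o(1)$, deliver $f(x)\to\log 2-\tfrac12$ as $x\to\infty$. Since the bounds are only approached, the sharp inequalities on all of $(0,\infty)$ will follow once I establish that $f$ is strictly increasing.

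This monotonicity is the crux of the argument. Differentiating $f$ and clearing $\sinh^{3}x$, a pleasant cancellation leaves the compact identity
\[
\sinh^{3}x\cdot f'(x)=2\cosh x\,\log\cosh x-x\sinh x,
\]
so $f'>0$ reduces, after dividing by $2\cosh x$, to $\log\cosh x>\tfrac12 x\tanh x$ for $x>0$. Setting $h(x)=2\log\cosh x-x\tanh x$, one has $h(0)=0$ and
\[
h'(x)=\tanh x-\frac{x}{\cosh^{2}x}=\frac{\sinh 2x-2x}{2\cosh^{2}x}>0,
\]
using $\sinh 2x>2x$. Hence $h>0$, so $f'>0$, and $f$ strictly increases from $0$ to $\log 2-\tfrac12$, which gives the theorem together with the sharpness of both bounds. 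The main obstacle I anticipate is precisely this differentiation step: the tidy collapse to $2\cosh x\log\cosh x-x\sinh x$ is what renders the monotonicity elementary, and had it not occurred one would be pushed into a higher-order analysis or a monotone l'Hospital-type rule.
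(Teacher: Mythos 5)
Your proof is correct and follows essentially the same route as the paper: the same parametrization $a=e^{x}$, $b=e^{-x}$, the same reduction to the monotonicity of $f(x)=x\coth x-\coth^{2}x\,\log\cosh x-\tfrac12$ via the identity $\sinh^{3}x\,f'(x)=2\cosh x\log\cosh x-x\sinh x$, and the same limit computations at $0$ and $\infty$. The only (welcome) difference is that you prove the key inequality $\log\cosh x>\tfrac{x}{2}\tanh x$ directly, whereas the paper cites it from the literature.
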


\begin{theorem}\label{thm2} For all distinct positive real numbers $a$ and $b$, we have
\begin{equation}\label{thm2ineq}
1<\frac{2I^2}{A^2+G^2}<c
\end{equation}
where $c=1.14\ldots$.
The bounds are best possible.
\end{theorem}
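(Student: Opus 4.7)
I would reduce to a one-variable problem using the homogeneity of $A$, $G$, and $I$. Setting $a=e^{t}$ and $b=e^{-t}$ for $t>0$ gives, by direct substitution into the definitions,
$$A=\cosh t,\qquad G=1,\qquad \log I = t\coth t - 1,$$
so that the inequality \eqref{thm2ineq} is equivalent to $0<\phi(t)<\log c$ for all $t>0$, where
$$\phi(t) := \log\frac{2I^{2}}{A^{2}+G^{2}} = 2(t\coth t - 1) - \log\frac{1+\cosh^{2} t}{2}.$$
Differentiating, placing terms over a common denominator, and using $\cosh^{2}t - \sinh^{2}t = 1$ to kill the cross-terms, I expect to obtain
$$\phi'(t) = \frac{2\,P(t)}{\sinh^{2} t\,(1+\cosh^{2}t)},\qquad P(t) := \sinh 2t - t(1+\cosh^{2}t),$$
so the sign of $\phi'$ is governed entirely by $P$.

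The crux is to locate a unique interior critical point. I would differentiate $P$ and hope for the factorization
$$P'(t) \;=\; 3\sinh^{2}t - t\sinh 2t \;=\; \sinh t\,\cosh t\,(3\tanh t - 2t),$$
which should drop out after applying $\cosh 2t = 2\cosh^{2}t - 1$. Since $3\tanh t - 2t$ vanishes at $t=0$ with derivative $3\,\mathrm{sech}^{2}t - 2$ that decreases monotonically from $1$ to $-2$, the factor $3\tanh t - 2t$ rises from $0$ to a maximum and then falls, vanishing at a single positive point $t_{1}$ and being positive on $(0,t_{1})$, negative on $(t_{1},\infty)$. Hence $P$ strictly increases on $(0,t_{1})$ from $P(0)=0$ and strictly decreases on $(t_{1},\infty)$, tending to $-\infty$ (because $t(1+\cosh^{2}t)\sim \tfrac{t}{4}e^{2t}$ eventually dominates $\sinh 2t\sim \tfrac{1}{2}e^{2t}$). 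This yields a unique zero $t^{*}\in (t_{1},\infty)$ of $P$, and therefore a unique critical point of $\phi$.

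Assembling these ingredients, $\phi$ increases strictly on $(0,t^{*})$ from $\phi(0^{+})=0$ to its maximum $\phi(t^{*})$, then decreases strictly on $(t^{*},\infty)$ to the positive limit $\phi(\infty)=\log(8/e^{2})$. Consequently $\phi(t)>0$ on $(0,\infty)$, which is the lower bound, and $\phi(t)\le\phi(t^{*})=\log c$, which is the upper bound, with $t^{*}$ determined by the transcendental condition $\sinh 2t^{*} = t^{*}(1+\cosh^{2}t^{*})$. A numerical solution gives $t^{*}\approx 1.606$ and $c\approx 1.14\ldots$, matching the stated constant; both bounds are sharp, with the lower one approached as $a\to b$ and the upper one attained at $t=t^{*}$. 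The step I expect to be most delicate is the uniqueness of $t^{*}$: everything hinges on the clean factorization of $P'(t)$, without which the sign analysis of $P$ would be substantially harder.
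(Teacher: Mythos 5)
Your proposal is correct and follows essentially the same route as the paper: the same normalization reduces the claim to $0<\phi(t)<\log c$, your $P(t)=\sinh 2t-t(1+\cosh^{2}t)$ coincides with the paper's auxiliary function $g$ (the paper works with $f=\phi+2$ and proves $2<f<2.1312\ldots$), and the factorization $P'(t)=\sinh t\cosh t\,(3\tanh t-2t)$, the monotonicity analysis giving a unique maximum, the limits $\phi(0^{+})=0$ and $\phi(\infty)=3\log 2-2>0$, and the numerical values $t^{*}\approx 1.606$, $c\approx 1.14$ all match the paper's argument. No gaps.
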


\begin{remark} \rm
{\bf A.} The left side of \eqref{thm2ineq} may be rewritten also as 
\begin{equation} \label{ineq3005a}
I>	Q(A,G),  
\end{equation}
where $Q(x,y)=\sqrt{(x^2+y^2)/2}$ denotes the root square mean of $x$ and $y$.
In 1995, Seiffert \cite{seiff0} proved  
the first inequality in \eqref{thmineq2905}
by using series representations, which is rather strong.
Now we prove that, \eqref{ineq3005a} is a refinement of the first inequality in \eqref{thmineq2905}. 
Indeed, by the known relation $I(x,y)< A(x,y)= (x+y)/2$, we can write 
$$I(A^2,G^2)< (A^2+G^2)/2 = Q(A,G)^2,$$
so one has:
\begin{equation} \label{ineq3005c}
I>Q(A,G) > \sqrt{I(A^2,G^2)}.
\end{equation}
As  we have $I(x^2,y^2)> I(x,y)^2$  (see S\'andor \cite{sandorc}), hence \eqref{ineq3005c} offers also a refinement of 
\begin{equation} \label{ineq3005d}
I> I(A,G). 
\end{equation}
Other refinements of \eqref{ineq3005d} have been provided in a paper by Neuman and S\'andor 
\cite{ns1605}. Similar inequalities involving the logarithmic mean, as well as S\'andor's means X  and Y , we quote \cite{newmean,sandora,sandornew}. In the second part of paper, similar results will be proved.

\noindent{\bf B.}  In 1991, S\'andor \cite{sandord} proved the inequality
\begin{equation} \label{ineq3005e}
I> (2A+G)/3.   
\end{equation}
It is easy to see that, the left side of \eqref{thm2ineq} and \eqref{ineq3005e} cannot be compared.

In 2001 S\'andor and Trif  \cite{st} have proved the following inequality:
\begin{equation} \label{ineq3005f}
I^2< (2A^2+G^2)/3 .
\end{equation}
The left side of \eqref{thm2ineq} offers a good companion to \eqref{ineq3005f}.
We note that the inequality \eqref{ineq3005f} and the right side of \eqref{thm2ineq} cannot be compared.
\end{remark}

In \cite{seiff0}, Seiffert proved the following relation:
\begin{equation}\label{611ineqa}
L(A^2,G^2)>  L^2,
\end{equation}
which was refined by Neuman and S\'andor \cite{ns1605} 
(for another proof, see\cite{ns1004a}) as follows:
\begin{equation}\label{611ineqb}
L(A,G)> L.
\end{equation}
We will prove with a new method the following refinement of \eqref{611ineqa} and a counterpart of \eqref{611ineqb}:

\begin{theorem}\label{thm611} We have
\begin{equation}\label{in0611}
L(A^2,G^2) =\frac{(A+G)}{2}L(A,G)>  \frac{(A+G)}{2}L >  L^2,
\end{equation}
\begin{equation}\label{in0611a}
L(I,G)  <  L,
\end{equation}
\begin{equation}\label{in-11}
L< L(I,L)<  L\cdot (I-L)/(L-G).
\end{equation}
\end{theorem}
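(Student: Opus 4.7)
My plan is to treat each chain separately. For \eqref{in0611}, the equality $L(A^{2},G^{2}) = \frac{A+G}{2}\,L(A,G)$ is immediate from the factorizations $\log(A^{2}/G^{2})=2\log(A/G)$ and $A^{2}-G^{2}=(A-G)(A+G)$. The middle inequality, divided by the positive factor $(A+G)/2$, is exactly \eqref{611ineqb} of Neuman and S\'andor. The rightmost inequality reduces to $(A+G)/2>L$, which follows at once from Carlson's bound $L<(2G+A)/3<(A+G)/2$.

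For \eqref{in0611a}, I would first derive the algebraic identity
\[
\log(I/G)\;=\;\frac{A-L}{L},\qquad\text{equivalently}\qquad \log I-\log G=\frac{A}{L}-1,
\]
by combining $\log I-\log G$ over the common denominator $2(a-b)$ and observing that the numerator simplifies to $(a+b)(\log a-\log b)=2A(\log a-\log b)$. Granting this, $L(I,G)=(I-G)/\log(I/G)=L(I-G)/(A-L)$, so \eqref{in0611a} becomes equivalent to $A+G>I+L$. By homogeneity I normalize $a=e^{x}$, $b=e^{-x}$ with $x>0$; using $A=\cosh x$, $G=1$, $L=\sinh(x)/x$, $I=e^{x\coth x-1}$, the statement reads
\[
\cosh x+1-\frac{\sinh x}{x}\;>\;e^{x\coth x-1}.
\]
Both sides equal $1$ at $x=0^{+}$; Taylor expansion reveals that the first non-vanishing term of the difference is $x^{6}/3240>0$. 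Positivity for all $x>0$ is then established by a careful derivative or power-series argument.

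For \eqref{in-11}, the left inequality $L<L(I,L)$ is immediate from $\log r<r-1$ applied to $r=I/L>1$. The right inequality, after clearing denominators (using $I>L$), is equivalent to $\log(I/L)>(L-G)/L$. In the above normalization, and using the half-angle identity $x\coth x+x/\sinh x=x\coth(x/2)$, this reduces to
\[
h(x):=x\coth(x/2)-\log(\sinh x/x)-2\;>\;0
\]
for $x>0$. Since $h(0^{+})=0$, it suffices to show $h'(x)>0$. A direct simplification (via $\coth(x/2)=\sinh x/(\cosh x-1)$ and $2\sinh^{2}(x/2)=\cosh x-1$, together with $\cosh^{2}x-\sinh^{2}x=1$) yields
\[
h'(x)\;=\;\frac{1}{\sinh x}+\frac{1}{x}-\frac{x}{\cosh x-1},
\]
and clearing denominators converts $h'(x)>0$ into $(x+\sinh x)(\cosh x-1)>x^{2}\sinh x$. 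A routine computation shows that the coefficient of $x^{2n+1}$ in the difference equals $(2^{2n}-4n^{2})/(2n+1)!$, which vanishes for $n=1,2$ and is strictly positive for every $n\ge 3$; hence the inequality holds term by term.

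The main obstacle is \eqref{in0611a}: the reformulation $A+G>I+L$ is extremely sharp (the gap is only of order $x^{6}$), and the term $e^{x\coth x-1}$ resists straightforward Taylor comparison. The key insight driving the entire plan is the identity $\log(I/G)=A/L-1$, which collapses the logarithmic mean $L(I,G)$ into the arithmetic expression $L(I-G)/(A-L)$ and thereby exposes the true content of \eqref{in0611a} and of the right half of \eqref{in-11}.
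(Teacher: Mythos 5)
Your reductions coincide exactly with the paper's: the identity $L(x^2,y^2)=\frac{x+y}{2}L(x,y)$ plus \eqref{611ineqb} and $L<(A+G)/2$ for \eqref{in0611}; the identity $\log(I/G)=A/L-1$ (your derivation of it is correct) turning \eqref{in0611a} into $I+L<A+G$; and the rewriting of the right half of \eqref{in-11} as $\log(I/L)>1-G/L$. Where you differ is that the paper closes the last two reductions by citation --- $I+L<A+G$ is Alzer's inequality and $\log(I/L)>1-G/L$ is S\'andor's --- whereas you attempt to prove both from scratch. Your proof of the S\'andor inequality actually goes through completely: the formula $h'(x)=\frac{1}{\sinh x}+\frac{1}{x}-\frac{x}{\cosh x-1}$ is correct, and the coefficient of $x^{2n+1}$ in $(x+\sinh x)(\cosh x-1)-x^2\sinh x$ is indeed $(2^{2n}-4n^2)/(2n+1)!$ (one checks $\sum_{j=0}^{n-1}\binom{2n+1}{2j+1}=2^{2n}-1$, giving $2^{2n}+2n$ versus $4n^2+2n$), which vanishes for $n=1,2$ and is positive for $n\ge 3$; this is a genuinely self-contained addition the paper does not supply. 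Your alternative argument for $L<L(I,L)$ via $\log r<r-1$ is also fine.

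The one real gap is \eqref{in0611a}. Your Taylor computation showing that $\cosh x+1-\frac{\sinh x}{x}-e^{x\coth x-1}=\frac{x^6}{3240}+O(x^8)$ is correct, but it only establishes the inequality near $x=0$; the phrase ``a careful derivative or power-series argument'' is not a proof, and because the terms of $e^{x\coth x-1}$ do not compare termwise with those of $\cosh x+1-\sinh(x)/x$ in any obvious way, this step is genuinely nontrivial as stated. You should either supply that argument or do what the paper does: observe that $I+L<A+G$ is precisely Alzer's theorem (C.~R.~Math.~Rep.~Acad.~Sci.~Canada, 1987) and cite it, at which point your proof is complete.
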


\begin{corollary}\label{coro} One has
\begin{equation}\label{ineq711a}
G\cdot I/L <  \sqrt{I\cdot G} <L(I,G)  <  L,
\end{equation}
\begin{equation}\label{ineq711b}
(L(I,G))^2<  L\cdot L(I,G)< L(I^2,G^2)<  L\cdot (I+G)/2.
\end{equation}
\end{corollary}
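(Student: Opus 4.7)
The plan for the first chain \eqref{ineq711a} is to combine two inputs: the classical geometric--logarithmic mean inequality $\sqrt{xy}<L(x,y)$ (for distinct positive $x,y$) and \eqref{in0611a} from Theorem~\ref{thm611}. Taking $x=I$, $y=G$ immediately yields the right half $\sqrt{IG}<L(I,G)<L$. The leftmost inequality $GI/L<\sqrt{IG}$ is, after dividing both sides by $\sqrt{IG}>0$, equivalent to $\sqrt{IG}<L$, which has just been obtained. Thus \eqref{ineq711a} follows at once from \eqref{in0611a} and $\sqrt{xy}<L(x,y)$.

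For the second chain \eqref{ineq711b} the idea is to use the same elementary identity that already appears in \eqref{in0611},
\[
L(x^2,y^2)=\frac{x+y}{2}\,L(x,y),
\]
obtained by factoring the numerator and denominator of $L(x^2,y^2)$. With $x=I$, $y=G$ this rewrites the middle term as $L(I^2,G^2)=\frac{I+G}{2}L(I,G)$, and each of the three inequalities in \eqref{ineq711b} then reduces to a one-line check. Specifically, $(L(I,G))^2<L\cdot L(I,G)$ and $L(I^2,G^2)<L\cdot (I+G)/2$ are both, after cancelling the common positive factor ($L(I,G)$ in one case, $(I+G)/2$ in the other), equivalent to $L(I,G)<L$, i.e.\ \eqref{in0611a}. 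The middle inequality $L\cdot L(I,G)<L(I^2,G^2)$ reduces, after cancelling $L(I,G)>0$, to the comparison $L<(I+G)/2$.

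The only nontrivial step is therefore to prove $L<(I+G)/2$, and this is precisely what \eqref{in-11} has been designed to furnish. Chaining its two ends gives $L<L\cdot (I-L)/(L-G)$; dividing by $L>0$ and multiplying by $L-G>0$ (valid since $L>G$) yields $L-G<I-L$, that is, $L<(I+G)/2$, as required. This is the one place where the full strength of \eqref{in-11}, and not merely \eqref{in0611a}, is used; everything else in the corollary is pure algebra.
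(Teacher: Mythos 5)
Your proof is correct and follows essentially the same route as the paper: the right half of \eqref{ineq711a} from $G(x,y)<L(x,y)$ applied to $(I,G)$ together with \eqref{in0611a}, and all of \eqref{ineq711b} from the identity $L(I^2,G^2)=\tfrac{I+G}{2}L(I,G)$ plus the two facts $L(I,G)<L$ and $L<(I+G)/2$. The only (harmless) difference is in sourcing: where the paper cites Alzer's $L>\sqrt{GI}$ for the leftmost inequality of \eqref{ineq711a} and quotes $L<(I+G)/2$ as a known result, you derive the former from the already-established chain $\sqrt{IG}<L(I,G)<L$ and the latter from \eqref{in-11}, which makes the corollary self-contained relative to Theorem~\ref{thm611}.
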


\begin{remark} \rm
\noindent {\bf A.} Relation \eqref{ineq711a} improves the inequality  
$$G\cdot I/L <  L(I,G),$$
due to Neuman and S\'andor \cite{ns1605}.
Other refinements of the inequality 
\begin{equation}\label{in-10}
L< (I+G)/2
\end{equation} 
are provided in \cite{sandorF}. 

\noindent {\bf B.} Relation \eqref{in-11} is indeed a refinement of \eqref{in-10}, as the weaker inequality can be written as $(I-L)/(L-G)>1$, which is in fact \eqref{in-10}. 
\end{remark}

The mean $S$ is strongly related to other classical means. For example, in 1993  S\'andor \cite{sandore} discovered the identity
\begin{equation}\label{1-in-1011}
S(a,b)=  I(a^2,b^2)/I(a,b),
\end{equation}
where $I$ is the identric mean. Inequalities for the mean $S$ may be found in 
\cite{sandorc,sandore,rasa}.

The following result shows that  $I$  and $S(A,G)$ cannot be compared , but this is not true in case of $I$ and $S(Q,G)$. Even a stronger result holds true.

\begin{theorem}\label{thm-1011}
None of the inequalities  
$I>S(A, G)$  or  $I< S(A,G )$ holds  true. 
On the other hand, one  has
\begin{equation}\label{2-in-1011}
S(Q,G)  >A >I,
\end{equation}
\begin{equation}\label{3-in-1511}
I(Q,G) <A.
\end{equation}
\end{theorem}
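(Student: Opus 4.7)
I would treat the three assertions of Theorem~\ref{thm-1011} in turn. The incomparability claim is disposed of by two numerical probes. At $(a,b)=(2,1)$, $I=4/e\approx 1.472$, while
\[
\log S(A,G)=\frac{A\log A+G\log G}{A+G}\approx 0.377,
\]
so $S(A,G)\approx 1.458<I$. At $(a,b)=(100,1)$ the same recipe yields $I\approx 38.54$ and $S(A,G)\approx 38.64$, reversing the direction. Hence neither inequality between $I$ and $S(A,G)$ holds universally.

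For the chain $S(Q,G)>A>I$, the workhorse is the algebraic identity
\[
Q(a,b)^2+G(a,b)^2=\frac{a^2+b^2}{2}+ab=\frac{(a+b)^2}{2}=2A(a,b)^2,
\]
which says precisely that $Q(Q,G)=A$. Consequently the inequality $S(Q,G)>A$ is equivalent to $S(Q,G)>Q(Q,G)$, a special case of the general lemma $S(x,y)>Q(x,y)$ for $x\neq y$. The companion $A>I$ is the classical identric--arithmetic inequality, so no new work is needed there.

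The lemma $S>Q$ is the technical heart. After taking logs and substituting $t=y/x\in(0,1]$ (exploiting symmetry of both means), it reduces to showing
\[
f(t):=\log\frac{2}{1+t^2}+\frac{2t}{1+t}\log t>0\quad\text{for }t\in(0,1),
\]
with $f(1)=0$. Routine differentiation and combination of the rational pieces give
\[
f'(t)=\frac{2(1-t)}{(1+t)(1+t^2)}+\frac{2\log t}{(1+t)^2},
\]
and the sign condition $f'(t)<0$ on $(0,1)$ collapses to $\log(1/t)>(1-t^2)/(1+t^2)$, or equivalently $\log s>(s^2-1)/(s^2+1)$ for $s>1$. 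Setting $h(s)=\log s-(s^2-1)/(s^2+1)$, one has $h(1)=0$ and
\[
h'(s)=\frac{1}{s}-\frac{4s}{(s^2+1)^2}=\frac{(s^2-1)^2}{s(s^2+1)^2}\geq 0,
\]
which closes the cascade. This reduction is the main obstacle; every other step in the argument is mere rearrangement.

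Finally, for $I(Q,G)<A$ I would chain the classical inequality $I(x,y)<(x+y)/2$ with the bound $(Q+G)/2<A$. Squaring the latter and invoking $Q^2+G^2=2A^2$ reduces it to $QG<A^2$, i.e., $8ab(a^2+b^2)<(a+b)^4$; the difference is exactly $(a-b)^4$, which is positive whenever $a\neq b$.
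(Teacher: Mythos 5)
Your proof is correct, and its skeleton matches the paper's: both arguments hinge on the identity $(Q^2+G^2)/2=A^2$ (i.e.\ $Q(Q,G)=A$), reduce $S(Q,G)>A$ to the lemma $S>Q$, and obtain $I(Q,G)<A$ by chaining $I<A$ with $(Q+G)/2<A$. The differences are worth noting. For the incomparability of $I$ and $S(A,G)$, the paper passes to the hyperbolic parametrization of Lemma \ref{BasicLemma} and reports (via Mathematica) that the resulting inequality \eqref{3-in-1011} fails for $x>2.284$ while its reverse fails for $x<2.2$; you instead exhibit two concrete pairs, $(2,1)$ and $(100,1)$, where the comparison goes in opposite directions. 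Your numbers check out ($I=4/e\approx 1.4715$ versus $S(A,G)\approx 1.4578$, and $I\approx 38.54$ versus $S(A,G)\approx 38.64$), and this is a cleaner, more directly verifiable way to settle a non-comparability claim. The more substantial divergence is that the paper simply cites S\'andor--Rasa \cite{rasa} for the inequality $S>Q$, whereas you prove it from scratch: the reduction to $f(t)=\log\bigl(2/(1+t^2)\bigr)+\tfrac{2t}{1+t}\log t>0$ on $(0,1)$, the computation of $f'$, and the final monotonicity step $h'(s)=(s^2-1)^2/\bigl(s(s^2+1)^2\bigr)\ge 0$ are all correct, so your argument is self-contained where the paper's is not. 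Finally, for $(Q+G)/2<A$ the paper invokes the AM--QM inequality applied to the pair $(Q,G)$, while you expand to $8ab(a^2+b^2)<(a+b)^4$ and identify the difference as $(a-b)^4$; both are valid, though the AM--QM route (or simply $QG\le (Q^2+G^2)/2=A^2$ by AM--GM) is shorter.
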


\begin{remark}\rm
By \eqref{2-in-1011} and \eqref{3-in-1511}, one could ask if $I$ and $I(Q,G)$ may be compared to each other.
It is not difficult to see that, this becomes equivalent to one of the inequalities
\begin{equation}\label{1711}
\frac{ y\log y}{y-1} < ({\rm or}>)\frac{x}{\tanh(x)},\quad x>0,
\end{equation}
where $y=\sqrt{\cosh(2x)}$. By using the Mathematica Software \cite{ru}, we can show that \eqref{1711} with $``<"$ is not true for 
$x=3/2$,
while \eqref{1711} with $``>"$ is not true for $x=2$.
\end{remark}


\section{\bf Lemmas and proofs of the main results}

The following lemma will be utilized in our proofs.
\begin{lemma}\label{BasicLemma} For $b>a>0$ there exists an $x>0$ such that 
\begin{equation}\label{ineq1205a}
\frac{A}{G} = \cosh(x),\, \frac{I}{G}= e^{x/\tanh (x) -1}.  
\end{equation}
\end{lemma}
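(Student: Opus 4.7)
The plan is to use the symmetric parametrization $a = Ge^{-x}$, $b = Ge^{x}$, where $G = \sqrt{ab}$ and $x = \tfrac{1}{2}\log(b/a) > 0$ (this is positive by the assumption $b > a > 0$). Equivalently, since $b > a > 0$ implies $A/G > 1$ by the AM-GM inequality, the equation $\cosh(x) = A/G$ has a unique positive solution, because $\cosh$ restricted to $[0,\infty)$ is a continuous strictly increasing bijection onto $[1,\infty)$; this unique $x$ is the parameter we want. The first identity $A/G = \cosh(x)$ is then immediate from $A = G(e^{x}+e^{-x})/2$.

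For the second identity, I would compute $\log(I/G)$ directly from the definitions. Starting from
\[
\log\!\left(\frac{I}{G}\right) = -1 + \frac{a\log a - b\log b}{a-b} - \log G,
\]
substitute $\log a = \log G - x$, $\log b = \log G + x$, $a-b = G(e^{-x} - e^{x}) = -2G\sinh(x)$, and expand
\[
a\log a - b\log b = G(e^{-x} - e^{x})\log G - xG(e^{-x} + e^{x}) = -2G\sinh(x)\log G - 2Gx\cosh(x).
\]
Dividing by $a-b$ gives $\log G + x\coth(x)$, so that after subtracting $\log G$ and $1$ one is left with $\log(I/G) = x/\tanh(x) - 1$, which exponentiates to the asserted formula.

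There is essentially no obstacle here: the lemma is a bookkeeping statement that records the well-known hyperbolic parametrization of a pair of positive reals by their geometric mean and half-log-ratio, and the only substantive point is the existence and uniqueness of $x$, which follows from $A > G$ and monotonicity of $\cosh$. The computational payoff, however, is significant: both $A/G$ and $I/G$ become single-variable functions of $x \in (0, \infty)$, so all the inequalities in Theorems \ref{thm1} and \ref{thm2} reduce to one-variable analytic inequalities in $x$, which is presumably how the authors exploit this lemma in the subsequent proofs.
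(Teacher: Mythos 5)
Your proposal is correct and follows essentially the same route as the paper: both introduce the parametrization $\{a,b\}=\{Ge^{x},Ge^{-x}\}$ with $x=\tfrac12|\log(a/b)|>0$, read off $A/G=\cosh(x)$, and substitute into the defining formula for $I$ to get $\log(I/G)=x/\tanh(x)-1$. Your write-up merely adds the (correct) uniqueness remark via monotonicity of $\cosh$ and spells out the algebra for $I$ in more detail.
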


\begin{proof}
For any $a>b>0$, one can find an $x>0$ such that  $a=e^x\cdot G$ and 
$b=e^{-x}\cdot G$. Indeed, it is immediate that such an $x$ is (by considering
$a/b= e^{2x}$), $x=(1/2)\log (a/b) >0$.
Now, as $A=  G\cdot(e^x+e^{-x})/2= G\cosh(x)$, we get  
$A/G= \cosh(x).$
Similarly, we  get
$$I= G\cdot (1/e) \exp(x(e^x+ e^{-x})/(e^x-e^{-x})),$$ which gives
$I/G=e^{x/\tanh (x)-1}$.
\end{proof}

\noindent{\bf Proof of Theorem \ref{thm1}.} For $x>0$, we have $I/G= e^{x/\tanh (x) -1}$ and $A/G= \cosh (x)$ by 
Lemma \ref{BasicLemma}.
Since $$\log( I(a,b))= \frac{a\log a-b\log b}{a-b} -1,$$ we get  
$$\log ( \sqrt {I((A/G)^2,1)}) = \frac{\cosh (x)^2 \log(\cosh (x))}{\cosh (x)^2 -1} -\frac{1}{2}.$$
 By using this identity, and taking the logarithms in the second identity 
of \eqref{ineq1205a}, the inequality   
$$0<  \log (I/G)  -\log (\sqrt{I(A/G)^2,1}) < \log 2-1/2 $$ 
becomes
\begin{equation}\label{ineq2905}
\frac{1}{2}< f(x)  < \log 2,
\end{equation}
where
$$f(x)=\frac{x}{\tanh(x)}-\frac{\log(\cosh(x))}{\tanh(x)^2}.$$
A simple computation (which we omit here) for the derivative of $f(x)$ gives:
\begin{equation}\label{ineq2905a}
\sinh (x)^3f'(x)= 2\cosh (x)\log(\cosh (x))- x\sinh(x). 
\end{equation}
The following inequality appears in \cite{ns0206}:
\begin{equation}\label{ineq2905b}
\log(\cosh (x))> \frac{x}{2}\tanh (x) ,\,\, x>0,
\end{equation}
which gives $f'(x)>0$, so $f(x)$ is strictly increasing in $(0, \infty)$.
As $\lim_{x\to 0} f(x)=1/2$, and $\lim_{x\to \infty} f(x)=\log 2$, the    
double inequality \eqref{ineq2905} follows.
So we have obtained a new proof of \eqref{thmineq2905}. $\hfill\square$

\bigskip

We note that Seiffert's proof is based on certain infinite series representations. 
Also, our proof shows that the constants $1$ and $2/\sqrt{e}$ in \eqref{thmineq2905} are optimal.

\begin{lemma}\label{lema2905} Let
$$f(x)=\frac{2x}{\tanh(x)}-\log\left(\frac{\cosh(x)^2+1}{2}\right),\,\,x>0.$$
Then 
\begin{equation}\label{lemineq2905}
2<f(x)<f(1.606\ldots)=2.1312\ldots.
\end{equation}
\end{lemma}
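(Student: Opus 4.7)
The plan is to show that $f$ is unimodal on $(0,\infty)$, attaining a unique interior maximum, and then read off the two bounds from the boundary limits and from the maximum value.

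First I would differentiate and clear denominators. Using $\operatorname{sech}^2(x)/\tanh^2(x)=1/\sinh^2(x)$ and the identity $\cosh^2(x)+1-\sinh^2(x)=2$, a direct computation gives
\begin{equation*}
f'(x)=\frac{\sinh(2x)-2x}{\sinh^2(x)}-\frac{\sinh(2x)}{\cosh^2(x)+1}
=\frac{2\bigl(\sinh(2x)-x(\cosh^2(x)+1)\bigr)}{\sinh^2(x)\,(\cosh^2(x)+1)}.
\end{equation*}
So $\operatorname{sgn} f'(x)=\operatorname{sgn} g(x)$ where $g(x):=\sinh(2x)-x(\cosh^2(x)+1)$.

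Next I would analyze the sign of $g$. Using $2\cosh(2x)-\cosh^2(x)-1=3\sinh^2(x)$, I obtain
\begin{equation*}
g'(x)=3\sinh^2(x)-x\sinh(2x)=\sinh(x)\cosh(x)\bigl(3\tanh(x)-2x\bigr).
\end{equation*}
For $x>0$ the sign of $g'(x)$ is that of $h(x):=3\tanh(x)-2x$. Since $h(0)=0$ and $h'(x)=3\operatorname{sech}^2(x)-2$ is strictly decreasing from $1$ to $-2$, the function $h$ first increases then decreases and thus has a unique positive zero $x_{1}\in(0,\infty)$ (numerically $x_1\approx 1.29$). Consequently $g$ is unimodal on $(0,\infty)$ with $g(0)=0$, $g(x_1)>0$, and $g(x)\to -\infty$ as $x\to\infty$ (since $\sinh(2x)\sim e^{2x}/2$ while $x(\cosh^2x+1)\sim xe^{2x}/4$). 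Therefore $g$ has a unique positive zero $x_{0}>x_{1}$, with $g>0$ on $(0,x_{0})$ and $g<0$ on $(x_{0},\infty)$. Hence $f$ is strictly increasing on $(0,x_{0})$ and strictly decreasing on $(x_{0},\infty)$, so $f$ attains its global maximum on $(0,\infty)$ at $x_{0}$.

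Finally I would compute the boundary limits and the maximum. As $x\to 0^{+}$, $2x/\tanh(x)\to 2$ and $\log((\cosh^2(x)+1)/2)\to 0$, giving $f(0^{+})=2$. As $x\to\infty$, writing $\cosh^2(x)+1=(e^{2x}+6+e^{-2x})/4$ and $2x\coth(x)=2x+O(xe^{-2x})$, one finds $f(x)\to 3\log 2\approx 2.079$. Since $f$ is strictly increasing away from $0$ and the right-hand limit $3\log 2$ exceeds $2$, the lower bound $f(x)>2$ follows at once. Solving $g(x_{0})=0$ numerically (using $g(1.6)\approx 0.016$, $g(1.7)\approx -0.334$) gives $x_{0}\approx 1.606$, and a direct evaluation yields $f(x_{0})\approx 2.1312$, establishing the upper bound.

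The only real obstacle is the algebraic simplification of $f'$ and $g'$; once the identities $\cosh^2-\sinh^2=1$ and $2\cosh(2x)-\cosh^2-1=3\sinh^2$ are exploited, the monotonicity argument reduces to the elementary fact that $3\tanh(x)-2x$ has a unique positive zero, and the numerical determination of $x_{0}$ and $f(x_{0})$ is routine.
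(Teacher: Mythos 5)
Your proposal is correct and follows essentially the same route as the paper: after clearing denominators your $g(x)=\sinh(2x)-x(\cosh^2(x)+1)$ is exactly the paper's auxiliary function, you reduce $g'$ to the sign of $3\tanh(x)-2x$ just as the paper does, and you conclude via the same unimodality argument, the boundary limits $2$ and $3\log 2$, and a numerical determination of the maximum point near $1.606$. The only differences are cosmetic (a cleaner bracketing $g(1.6)>0>g(1.7)$ instead of the paper's $g(3/2)>0>g(2)$, and establishing the unique zero of $3\tanh(x)-2x$ via the monotonicity of $h'$ rather than of $\tanh(x)/x$).
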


\begin{proof}
One has  $(\cosh(x)^2+1)/2f'(x)= g(x)$,
where  
$$g(x)= \sinh (x)\cosh (x)^3 –x\cosh (x)^2+ \sinh (x)\cosh (x)-x$$
$$-\cosh(x)\sinh (x)^3  
2\sinh (x)\cosh (x)-x\cosh (x)^2 –x,$$   
by remarking that  $$\sinh (x)\cosh (x)^3-\cosh (x)\sinh (x)^3 =\sinh (x)\cosh (x).$$ 

Now, a simple computation gives  
$$g'(x)= \sinh(x)\cdot(3\sinh(x)-2x\cosh (x))= 3\sinh (x)\cosh (x)\cdot k(x),$$  where 
$k(x)= \tanh (x)-2x/3$
As it is well known that the function $\tanh (x)/x$ is strictly decreasing, the equation   $\tanh (x)/x =2/3 $ can have at most a single solution.
As $\tanh (1)=0.7615\ldots> 2/3$ and $\tanh (3/2)= 0.9051\ldots<1= (2/3).(3/2)$, we find that the equation $k(x)=0$ has a single solution $x_0$ in $(1, 3/2)$, and also that $k(x)>0$
 for $x$ in $(0, x_0)$ and $k(x)<0$ in $(x_0, 3/2)$. This means that the function $g(x)$ is strictly increasing in the interval $(0, x_0)$ and strictly decreasing in $(x_0, \infty )$.  As $g(1)= 0.24…>0$, clearly $g(x_0)>0$, while $g(2)= -3.01..<0$ implies that there exists a single zero $x_1$ of $g(x)$ in $( x_0, 2)$. In fact, as $g(3/2)= 0.21…>0$, we get that $x_1$  is in $(3/2, 2)$.

From the above consideration we conclude that $g(x)>0$ for  $x\in( 0, x_1)$ and $g(x)<0$  for $x \in (x_1, \infty)$. Therefore, the point $x_1$ is a maximum point to the function $f(x)$.
It is immediate that $\lim_{x\to 0} f(x)=2.$
On the other hand, we shall compute the limit of f(x) at $\infty$.
Clearly $t=\cosh (x)$  tends to $\infty$ as $x$ tends to $\infty$. 
Since $\log (t^2 +1) - \log (t^2)= \log ((t^2+1)/t^2 )$ tends to $log 1=0$, we have to compute the limit of $l(x)= 2x\cosh (x)/\sinh(x) -2\log(\cosh (x))+ \log 2$. Here  
$$2x\frac{\cosh (x)}{\sinh (x)} -2\log(\cosh (x))= 
2\log\left( \frac{ \exp(x\cosh (x)/\sinh (x))}{\cosh x}\right). $$ 
Now remark that  $(x\cosh (x)-x\sinh (x))/\sinh (x)$  tends to zero, as 
$x\cosh( x)-x\sinh (x)= x\exp(-x)$. As $\exp(x)/\cosh x$  tends to $2$, by the above remarks we get that the the limit of  $l(x) $ is $ 2 \log 2 + \log 2= 3\log 2 >2$.
Therefore, the left side of inequality \eqref{lemineq2905} is proved. 
The right side follows by the fact that  $f(x)< f(x_1)$.  By Mathematica Software\textsuperscript{\tiny{\textregistered}} \cite{ru}, we can find 
$x_1 =1.606\ldots$ and $f(x_1)= 2.1312\ldots$.
\end{proof}

\noindent{\bf Proof of Theorem \ref{thm2}.} By Lemma \ref{BasicLemma}, one has  $(I/G)^2 = \exp(2(x/\tanh(x) -1))$, while $(A/G)^2= \cosh (x)^2,\, x>0.$
It is immediate that, the left side of \eqref{lemineq2905} implies the left side of 
\eqref{thm2ineq}.
Now, by the right side of \eqref{lemineq2905} one has  $I^2< \exp(c_1)(A^2+G^2)/2$, where  
$c_1= f(x_1)-2=0.13\cdots$. Since $\exp(0.13\cdots)= 1.14$, we get also the right side of 
\eqref{thm2ineq}. $\hfill\square$


\vspace{.4cm}
\noindent{\bf Proof of Theorem \ref{thm611}.} The first relation of \eqref{in0611} follows from the identity   
$$L(x^2,y^2)= ((x+y)/2)\cdot L(x,y),$$ 
which is a consequence of the definition of logarithmic mean, by letting $x=A, y=G$.   
The second inequality of \eqref{in0611} follows by \eqref{611ineqb}, while the third one is a consequence of the known inequality   
\begin{equation}\label{611ineqd}
L<(A+G)/2.
\end{equation}
A simple proof of \eqref{611ineqd} can be found in \cite{sandor611}. 
For \eqref{in0611a}, by the definition of logarithmic mean, one has  
$$L(I,G)=  (I-G)/\log (I/G),$$ 
and on base of the known identity 
$$\log (I/G) = A/L -1$$
(see \cite{sandorc,Seif1}),
we get    
$$L(I,G)=  ((I-G)/(A-L))L  < L,$$
since  the inequality   $(I-G)/(A-L)<1 $  can be rewritten as 
$$I+L  < A+ G $$
due to Alzer (see \cite{sandorc}).

The first inequality of \eqref{in-11} follows by the fact that  $L$ is a mean 
(i.e. if $x<y$ then   $x<L(x,y)<y$), and the well known relation $L< I$ 
(see \cite{sandorc})
For the proof of last relation of \eqref{in-11}  we will use a known inequality of 
S\'andor (\cite{sandorc}), namely:
\begin{equation}\label{in-12}
\log (I/L) >  1-  G/L.   
\end{equation}
Write now that $L(I,L)= (I-L)/\log (I/L)$, and apply \eqref{in-12}. Therefore, the proof of 
\eqref{in-11} is finished. $\hfill\square$

\vspace{.4cm}
\noindent{\bf Proof of Corollary \ref{coro}.}
The first inequality of \eqref{ineq711a} follows by the well known relation  
$L>\sqrt{GI}$ (see  \cite{alzer2}), while the second relation is a consequence of the classical relation $L(x,y)> G(x,y)$ (see e.g. \cite{sandorc}) applied to $x=I, y=G$. The last relation is inequality \eqref{in0611}.

The first inequality of \eqref{ineq711b} follows by \eqref{in0611}, while the second one by 
$L(I^2,G^2)= L(I,G)\cdot(I+G)/2$ and inequality $L< (I+G)/2 $. The last inequality follows in the same manner. $\hfill\square$

\vspace{.4cm}
\noindent{\bf Proof of Theorem \ref{thm-1011}.}
Since the mean $S$ is homogeneous, the relation  $I>S(A,G)$ may be rewritten as  
$I/G > S(A/G, 1)$, so by using logarithm and applying Lemma \ref{BasicLemma}, this inequality may be rewritten as 
\begin{equation}\label{3-in-1011}
\frac{x}{\tanh (x)}-1  >  \frac{\cosh (x) \log(\cosh(x))}{1+\cosh (x) },\quad  x>0.
\end{equation}
By using Mathematica Software\textsuperscript{\tiny{\textregistered}} \cite{ru}, one can see that inequality \eqref{3-in-1011} is not true for $x> 2.284$ . Similarly, the reverse inequality of \eqref{3-in-1011} is not true, e.g.
for $x<2.2$. 
These show that,  $I$ and $S(A,G)$ cannot be compared to each other.
In order to prove inequality \eqref{2-in-1011}, we will use the following result proved in 
\cite{rasa}:
The inequality
\begin{equation}\label{4-in-1011}
S>Q
\end{equation}
holds true. By writing \eqref{4-in-1011} as $S(a,b)>Q(a,b)$ for $a=Q,\, b= G$, and remarking that  $Q(a,b) = \sqrt{(a^2+b^2)/2}$  and that  $(Q^2+G^2)/2=  A^2$,  we get the first inequality of \eqref{2-in-1011}.
The second inequality is well known (see \cite{sandorc} for history and references).

By using $I(a,b)< A(a,b)= (a+b)/2$  for $a=Q$ and $b=G$ we get  
$I(Q,G) < (Q+G)/2$. On the other hand by inequality $(a+b)/2 < \sqrt{(a^2+b^2)/2}$  and  
$(Q^2+G^2)/2 = A^2$, inequality \eqref{3-in-1511} follows as well. This completes the proof.
$\hfill\square$


\vspace{.5cm}

\end{document}